
\documentclass[reqno, 11pt]{amsart}
\usepackage{amsmath}
\usepackage{amssymb}
\usepackage{amsthm}
\usepackage{times}
\usepackage{latexsym}
\usepackage[mathscr]{eucal}
\usepackage[centertags]{amsmath}
\usepackage{amsmath}
\usepackage{amsthm}
\usepackage{color}
\usepackage[english]{babel}
\usepackage[latin1]{inputenc}
\usepackage[T1]{fontenc}
\usepackage{graphicx}
\usepackage{amsfonts}
\usepackage{amssymb}
\usepackage[all,cmtip]{xy}

\setcounter{MaxMatrixCols}{10}

\numberwithin{equation}{section}

\newtheorem{theorem}{Theorem}[section]

\newtheorem{definition}[theorem]{Definition}

\email{abdoulsalam.diallo@uadb.edu.sn}
\email{punam\_2101@yahoo.co.in}
\thanks{}
\subjclass[2010]{Primary 53B05; Secondary 53B20} \keywords{Affine
connection; Deformed Riemannian extensions; Szab\'o manifolds.}

\begin{document}
\title[Four-dimensional semi-Riemannian Szab\'o manifolds]{Four-dimensional semi-Riemannian Szab\'o manifolds}
\author[Abdoul Salam Diallo and Punam Gupta]{Abdoul Salam Diallo and Punam
Gupta}
\address{\llap{\,} Universit\'e Alioune Diop de Bambey\\
\indent
UFR SATIC, D\'epartement de Math\'ematiques\\
\indent
Equipe de Recherche en Analyse Non Lin\'eaire et G\'eom\'etrie\\
\indent
B. P. 30, Bambey, S\'en\'egal}
\address{\llap{**\,} Department of Mathematics \& Statistics\\
\indent
 School of Mathematical \& Physical Sciences\\
\indent
Dr. Harisingh Gour University\\
\indent
Sagar-470003, Madhya Pradesh}

\begin{abstract}
In this paper, we prove that the deformed Riemannian extension of
any affine Szab\'o manifold is a Szab\'o pseudo-Riemannian metric
and vice-versa. We proved that the Ricci tensor of an affine surface
is skew-symmetric and nonzero everywhere if and only if the affine
surface is Szab\'o. We also find the necessary and sufficient
condition for the affine Szab\'o surface to be recurrent. We prove
that for an affine Szabó recurrent surface the recurrence covector
of a recurrence tensor is not locally a gradient.
\end{abstract}

\maketitle

\section{Introduction}

Let $T^{\ast }M$ be the cotangent bundle of $n$-dimensional manifold $M$
with a torsion free affine connection $\nabla $. Patterson and Walker \cite%
{PattersonWalker1952} introduced the notion of Riemannian extensions and
showed how to construct a pseudo-Riemannian metric on the $2n$-dimensional
cotangent bundle of any $n$-dimensional manifold with a torsion free
connection. Afifi \cite{Afifi1954} studied the local properties of
Riemannian extension of connected affine spaces. Riemannian extensions were
also studied by Garcia-Rio et al. \cite{Garcia1999} for Osserman manifolds.
One of the author Diallo \cite{Diallo2013} find the fruitful results for the
Riemannian extension of an affine Osserman connection on $3$-dimensional
manifolds. In \cite{Garcia}%
, the authors generalized the Riemannian extension to the deformed
Riemannian extensions. In the recent paper \cite{DialloGupta2020},
we construct example of pseudo-Riemannian Szabó metrics of signature
$(2,2)$ by using the deformed Riemannian extension, whose Szabó
operators are nilpotent. The Riemannian extension can be constructed
with the help of the coefficients of the
torsion free affine connection. For Riemannian extensions, also see \cite%
{KowalskiSekizawa2011, KowalskiSekizawa2014, Vanhecke1982}. For deformed Riemannian extensions, also see \cite%
{Bonome1998,Brozos2009, DialloHassirouKatambe2013}.

In this paper, we study the deformed Riemannian extensions of affine
Szab\'o manifold. Our paper is organized as follows. In the section
\ref{Deformed}, we recall some basic definitions and results on the
deformed Riemannian extension. In section \ref{Szabo}, we provide
some known results on affine Szabó manifolds. We proved that the
Ricci tensor of an affine surface is skew-symmetric and nonzero
everywhere if and only if affine surface is Szab\'o. We also find
the necessary and sufficient condition for the affine Szab\'o
surface to be recurrent. We prove that for an affine Szab\'o
recurrent surface the recurrence covector of a recurrence tensor is
not locally a gradient. Finally in section \ref{Main}, we prove that
the deformed Riemannian extension of any affine Szab\'o manifold is
a Szabó pseudo-Riemannian metric and vice-versa.

Throughout this paper, all manifolds, tensors fields and connections are
always assumed to be $\mathcal{C}^{\infty }$-differentiable.

\section{Deformed Riemannian extensions}

\label{Deformed}

Let $T^{\ast }M$ be the cotangent bundle of $n$-dimensional affine manifold $%
M$ with torsion free affine connection $\nabla $ and let $\pi :T^{\ast
}M\rightarrow M$ be the natural projection defined by
\begin{equation*}
\pi (p,\omega )=p\in M\quad \mathrm{and}\quad (p,\omega )\in T^{\ast }M.
\end{equation*}
A system of local coordinates $(U,u_{i}),i=1,\ldots ,n$ around $p\in M$
induces a system of local coordinates $(\pi ^{-1}(U),u_{i},u_{i^{\prime
}}=\omega _{i}),i^{\prime }=n+i=n+1,\ldots ,2n$ around $(p,\omega )\in
T^{\ast }M$, where $\omega _{i}$ are components of covectors $\omega $ in
each cotangent space $T_{p}^{\ast }M$, $p\in U$ with respect to the natural
coframe $\{du^{i}\}$. Let $\partial _{i}=\dfrac{\partial }{\partial u_{i}}$
and $\partial _{i^{\prime }}=\dfrac{\partial }{\partial \omega _{i}}%
,i=i,\ldots ,n$, then at each point $(p,\omega )\in T^{\ast }M$,
\begin{equation*}
\Big\{(\partial _{1})_{(p,\omega )},\ldots ,(\partial _{n})_{(p,\omega
)},(\partial _{1^{\prime }})_{(p,\omega )},\ldots ,(\partial _{n^{\prime
}})_{(p,\omega )}\Big\},
\end{equation*}%
is a basis for the cotangent space $(T^{\ast }M)_{(p,\omega )}$. For more
details on the geometry of cotangent bundle, see \cite{Yano1972}.

The Riemannian extension $g_{\nabla }$ is the pseudo-Riemannian metric on $%
T^{\ast }M$ of neutral signature $(n,n)$ characterized by the identity \cite%
{Garcia}
\begin{equation*}
g_{\nabla }(X^{C},Y^{C})=-\iota (\nabla _{X}Y+\nabla _{Y}X),
\end{equation*}%
where $X^{C}$ is a complete lift of the vector field $X$ on $M$ and the
function $\iota X:T^{\ast }M\longrightarrow \mathbb{R}$ defined by
\begin{equation*}
\iota X(p,\omega )=\omega (X_{p}).
\end{equation*}%
For more details, see \cite{Garcia}. In the locally induced coordinates $%
(u_{i},u_{i^{\prime }})$ on $\pi ^{-1}(U)\subset T^{\ast }M$, the Riemannian
extension \cite{PattersonWalker1952} is expressed by
\begin{equation*}
g_{\nabla }=\left(
\begin{array}{cc}
-2u_{k^{\prime }}\Gamma _{ij}^{k} & \delta _{i}^{j} \\
\delta _{i}^{j} & 0%
\end{array}%
\right) ,
\end{equation*}%
with respect to the basis $\{\partial _{1},\ldots ,\partial _{n},\partial
_{1^{\prime }},\ldots ,\partial _{n^{\prime }}\}(i,j,k=1,\ldots ,n;k^{\prime
}=k+n)$, where $\Gamma _{ij}^{k}$ are the coefficients of the torsion free
affine connection $\nabla $ with respect to $(U,u_{i})$ on $M$.

Riemannian extensions provide a link between affine and pseudo-Riemannian
geometries, therefore by using the properties of the Riemannian extension $%
g_{\nabla }$, we investigate the properties of the affine connection $\nabla
$. Like, $(M,\nabla )$ is locally symmetric if and only if $(T^{\ast
}M,g_{\nabla })$ is locally symmetric. In the same way, $(M,\nabla )$ is
projectively flat if and only if $(T^{\ast }M,g_{\nabla })$ is locally
conformally flat \cite{Calvino2010}.

Let $\phi $ be a symmetric $(0,2)$-tensor field on an affine manifold $%
(M,\nabla )$. In \cite{Calvino2010}, the authors introduced a deformation of
the Riemannian extension by means of a symmetric $(0,2)$-tensor field $\phi $
on $M$. They considered the cotangent bundle $T^{\ast }M$ equipped with the
metric $g_{\nabla }+\pi ^{\ast }\phi $, which is called the deformed
Riemannian extension.

The deformed Riemannian extension denoted $g_{(\nabla ,\phi )}$ is the
metric of neutral signature $(n,n)$ on the cotangent bundle given by
\begin{equation*}
g_{(\nabla ,\phi )}=g_{\nabla }+\pi ^{\ast }\phi .
\end{equation*}%
In local coordinates the deformed Riemannian extension is given by
\begin{equation*}
g_{(\nabla ,\phi )}=\left(
\begin{array}{cc}
\phi _{ij}(u)-2u_{k^{\prime }}\Gamma _{ij}^{k} & \delta _{i}^{j} \\
\delta _{i}^{j} & 0%
\end{array}%
\right) ,
\end{equation*}%
with respect to the basis$\{\partial _{1},\ldots ,\partial _{n},\partial
_{1^{\prime }},\ldots ,\partial _{n^{\prime }}\},(i,j,k=1,\ldots
,n;k^{\prime }=k+n)$, where $\Gamma _{ij}^{k}$ are the coefficients of the
torsion free affine connection $\nabla $ and $\phi _{ij}$ are the local
components of the symmetric $(0,2)$-tensor field $\phi $. Equivalently,
\begin{equation*}
g_{(\nabla ,\phi )}(\partial _{i},\partial _{j})=\phi _{ij}(u)-2u_{k^{\prime
}}\Gamma _{ij}^{k};\quad g_{(\nabla ,\phi )}(\partial _{i},\partial
_{j^{\prime }})=\delta _{i}^{j};\quad g_{(\nabla ,\phi )}(\partial
_{i^{\prime }},\partial _{j^{\prime }})=0
\end{equation*}%
Note that the crucial terms $g_{(\nabla ,\phi )}(\partial _{i},\partial
_{j}) $ now no longer vanish on the $0$-section, which was the case for the
Riemannian extension, the Walker distribution is the kernel of the
projection from $T^{\ast }M$:
\begin{equation*}
\mathcal{D}=\ker \{\pi ^{\ast }\}=\mathrm{Span}\{\partial _{i^{\prime }}\}.
\end{equation*}%

In the deformed Riemannian extension, the tensor $\phi $ plays an important
role. If the underlying connection is flat, the deformed Riemannian
extension need not be flat \cite{Garcia}. Deformed Riemannian extensions
have nilpotent Ricci operator therefore they are Einstein if and only if
they are Ricci flat. So deformed Riemannian extension can be used to
construct non-flat Ricci flat pseudo-Riemannian manifolds \cite{Calvino2010}.

\section{The affine Szabó manifolds}\label{Szabo}

Let $(M,\nabla )$ be an affine manifold and $X\in \Gamma (T_{p}M)$. The
affine Szabó operator $\mathcal{S}^{\nabla }(X)$ \cite{Szabo} with respect
to $X$ is a function from $T_{p}M$ to $T_{p}M$, $p\in M$ defined by%
\begin{equation*}
\mathcal{S}^{\nabla }(X)Y=(\nabla _{X}\mathcal{R}^{\nabla })(Y,X)X,
\end{equation*}%
for any vector field $Y$ and where $\mathcal{R}^{\nabla }$ is the curvature
operator of the affine connection $\nabla$. The affine Szabó operator
satisfies $\mathcal{S}^{\nabla }(X)X=0$ and
$\mathcal{S}^{\nabla }(\beta X)=\beta ^{3}\mathcal{S}(X)$, for
$\beta \in \mathbb{R}^{\ast }$. If $Y=\partial _{m}$, for
$m=1,2,\ldots ,n$ and $X=\sum_{i}\alpha _{i}\partial _{i}$, we have
\begin{equation*}
\mathcal{S}^{\nabla }(X)\partial _{m}=\sum_{i,j,k=1}^{n}\alpha _{i}\alpha
_{j}\alpha _{k}(\nabla _{i}\mathcal{R}^{\nabla })(\partial _{m},\partial
_{j})\partial _{k},
\end{equation*}%
where $\nabla _{i}=\nabla _{\partial _{i}}$.

Let $(M,\nabla )$ be an affine manifold and $p\in M$. $(M,\nabla )$ is said
to affine Szabó at $p\in M$ if the affine Szabó operator $\mathcal{S}%
^{\nabla }$ has the same characteristic polynomial for every vector field $X$
on $M$. If $(M,\nabla )$ is affine Szabó at each $p\in M$, then $(M,\nabla )$
is known as affine Szabó. For more details, see \cite%
{DialloLongwapMassamba2017-1}.

Now, we give a known result for later use.

\begin{theorem}\cite{DialloMassamba2017}\label{TheorSZA1} Let $(M,\nabla)$
be an $n$-dimensional affine manifold and $p\in M$. Then $(M,\nabla)$
is affine Szabó at $p\in M$ if and only if the characteristic polynomial
of the affine Szabó operator $\mathcal{S}^{\nabla }$ is
$P_{\lambda }[\mathcal{S}^{\nabla}(X)]=\lambda ^{n},$ for every $X\in T_{p}M$.
\end{theorem}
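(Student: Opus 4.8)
The plan is to dispatch the two implications separately, with the forward (``if'') direction being immediate and all of the substance lying in the converse. For the ``if'' direction, if $P_{\lambda}[\mathcal{S}^{\nabla}(X)]=\lambda^{n}$ for every $X\in T_{p}M$, then the characteristic polynomial is literally the fixed polynomial $\lambda^{n}$ regardless of $X$, so in particular it is the same for every $X$; by definition this says that $(M,\nabla)$ is affine Szab\'o at $p$. Nothing further is needed here.

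For the converse I would exploit the homogeneity relation $\mathcal{S}^{\nabla}(\beta X)=\beta^{3}\mathcal{S}^{\nabla}(X)$ recorded above, which is the key structural input. Write the characteristic polynomial as
\[
P_{\lambda}[\mathcal{S}^{\nabla}(X)]=\sum_{i=0}^{n}(-1)^{i}\sigma_{i}(X)\,\lambda^{n-i},
\]
where $\sigma_{i}(X)$ is the $i$-th elementary symmetric function of the eigenvalues of $\mathcal{S}^{\nabla}(X)$ (equivalently, the sum of its $i\times i$ principal minors) and $\sigma_{0}(X)=1$. Since the eigenvalues of $\mathcal{S}^{\nabla}(\beta X)=\beta^{3}\mathcal{S}^{\nabla}(X)$ are exactly $\beta^{3}$ times those of $\mathcal{S}^{\nabla}(X)$, each coefficient obeys the scaling law $\sigma_{i}(\beta X)=\beta^{3i}\sigma_{i}(X)$.

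Now I would invoke the hypothesis that $(M,\nabla)$ is affine Szab\'o at $p$: the characteristic polynomial is independent of the chosen direction, so $\sigma_{i}(\beta X)=\sigma_{i}(X)$ for every $\beta\in\mathbb{R}^{\ast}$ and every $i$. Combining the two identities yields $\beta^{3i}\sigma_{i}(X)=\sigma_{i}(X)$ for all $\beta\in\mathbb{R}^{\ast}$. For each $i\geq 1$ one may choose $\beta$ with $\beta^{3i}\neq 1$ (for instance $\beta=2$), which forces $\sigma_{i}(X)=0$. Hence only the leading term survives and $P_{\lambda}[\mathcal{S}^{\nabla}(X)]=\lambda^{n}$, as claimed. (The auxiliary identity $\mathcal{S}^{\nabla}(X)X=0$ already guarantees $\sigma_{n}(X)=\det\mathcal{S}^{\nabla}(X)=0$, which is consistent with, but weaker than, this conclusion.)

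I do not anticipate a genuine obstacle: once the homogeneity $\mathcal{S}^{\nabla}(\beta X)=\beta^{3}\mathcal{S}^{\nabla}(X)$ is in hand, the argument is purely algebraic. The only point requiring a little care is the bookkeeping that the $i$-th coefficient of a scaled operator scales by the $i$-th power of the scalar; equivalently one can argue directly with the spectrum, observing that if $\mathcal{S}^{\nabla}(X)$ and $8\,\mathcal{S}^{\nabla}(X)$ share a characteristic polynomial, then the eigenvalue of largest modulus must equal eight times itself and hence vanish, so the whole spectrum reduces to $\{0\}$. I would also confirm that the Szab\'o hypothesis is being read pointwise at $p$ and applied to the nonzero $X$ permitted by $\beta\in\mathbb{R}^{\ast}$, which matches the definition given above.
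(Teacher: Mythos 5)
Your proof is correct; note that the paper itself states this theorem without proof, citing \cite{DialloMassamba2017}, and your argument via the homogeneity $\mathcal{S}^{\nabla}(\beta X)=\beta^{3}\mathcal{S}^{\nabla}(X)$ forcing each coefficient $\sigma_{i}(X)$ to satisfy $\beta^{3i}\sigma_{i}(X)=\sigma_{i}(X)$ and hence vanish for $i\geq 1$ is exactly the standard scaling argument used in that reference (and in the analogous affine Osserman setting). No gaps: the trivial ``if'' direction and the coefficient bookkeeping are both handled correctly.
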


We have a complete description of affine Szabó surfaces.

\begin{theorem}
\cite{DialloMassamba2017}\label{TheorSZA2} Let $\varSigma=(M,\nabla)$ be an
affine surface. Then $\varSigma$ is affine Szab\'o at $p\in M$ if and only
if the Ricci tensor of $(M,\nabla)$ is cyclic parallel at $p\in M$.
\end{theorem}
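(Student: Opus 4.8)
The plan is to prove the equivalence: an affine surface $\varSigma = (M,\nabla)$ is affine Szab\'o at $p$ if and only if its Ricci tensor $\rho$ is cyclic parallel at $p$.

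First I would reduce the Szab\'o condition to an algebraic one using Theorem \ref{TheorSZA1}. By that theorem, $(M,\nabla)$ is affine Szab\'o at $p$ exactly when the characteristic polynomial of $\mathcal{S}^{\nabla}(X)$ equals $\lambda^n = \lambda^2$ for every $X \in T_pM$ (here $n=2$). Since $\mathcal{S}^{\nabla}(X)X = 0$, the operator $\mathcal{S}^{\nabla}(X)$ always has $0$ as an eigenvalue with eigenvector $X$, so in dimension two the characteristic polynomial has the form $\lambda^2 - \operatorname{tr}(\mathcal{S}^{\nabla}(X))\,\lambda$. Thus $P_\lambda[\mathcal{S}^{\nabla}(X)] = \lambda^2$ for all $X$ if and only if $\operatorname{tr}\,\mathcal{S}^{\nabla}(X) = 0$ for all $X$. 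This converts the Szab\'o condition into the single scalar requirement that the trace of the Szab\'o operator vanishes identically in $X$.

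Next I would compute $\operatorname{tr}\,\mathcal{S}^{\nabla}(X)$ explicitly in local coordinates. Using the expansion $\mathcal{S}^{\nabla}(X)\partial_m = \sum_{i,j,k} \alpha_i\alpha_j\alpha_k (\nabla_i \mathcal{R}^{\nabla})(\partial_m,\partial_j)\partial_k$ with $X = \sum_i \alpha_i \partial_i$, the trace is obtained by summing the $\partial_m$-component over $m$. In dimension two the curvature operator is governed by the Ricci tensor: on a surface the $(1,3)$-curvature tensor is determined by $\rho$, so the trace of $\mathcal{S}^{\nabla}(X)$ can be rewritten, after contracting, as a cubic form in the components $\alpha_i$ whose coefficients are the components $(\nabla_i \rho)(\partial_j,\partial_k)$ of the covariant derivative of the Ricci tensor. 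The key computation is to show that $\operatorname{tr}\,\mathcal{S}^{\nabla}(X)$ equals (up to a nonzero constant) the totally symmetrized expression $\sum_{i,j,k}\alpha_i\alpha_j\alpha_k\,(\nabla_i \rho)(\partial_j,\partial_k)$, which is precisely the contraction producing the cyclic-parallel symmetrization $\mathfrak{S}_{ijk}(\nabla_i \rho)(\partial_j,\partial_k)$.

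Finally, I would observe that a cubic form $\sum_{i,j,k}\alpha_i\alpha_j\alpha_k\,c_{ijk}$ vanishes identically for all $\alpha$ if and only if its total symmetrization $c_{(ijk)}$ vanishes, i.e.\ $(\nabla_i\rho)(\partial_j,\partial_k) + (\nabla_j\rho)(\partial_k,\partial_i) + (\nabla_k\rho)(\partial_i,\partial_j) = 0$, which is the definition of $\rho$ being cyclic parallel. Combining this with the trace reduction from the first step yields the stated equivalence. I expect the main obstacle to be the middle step: carefully expressing the surface curvature operator in terms of the Ricci tensor and verifying that the trace of the Szab\'o operator collapses exactly to the cyclic (symmetrized) covariant derivative of $\rho$, keeping track of index contractions and the symmetries of $\mathcal{R}^{\nabla}$ so that no spurious terms survive.
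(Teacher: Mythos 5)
Your outline is sound and follows what is essentially the standard route for this result; note that the paper itself offers no proof of Theorem \ref{TheorSZA2} (it is quoted from \cite{DialloMassamba2017}), but your plan matches the machinery the paper does display: reduce via Theorem \ref{TheorSZA1} to the vanishing of the trace of the matrix (\ref{eq-mat}), then identify that trace with a contraction of $\nabla\rho$. Your first two steps check out completely. Since $\mathcal{S}^{\nabla}(X)X=0$, for $X\neq 0$ the determinant vanishes and the characteristic polynomial is $\lambda^{2}-\operatorname{tr}\bigl(\mathcal{S}^{\nabla}(X)\bigr)\lambda$, so the Szab\'o condition is exactly $\operatorname{tr}\mathcal{S}^{\nabla}(X)=0$ for all $X$. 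For the middle step, the surface identity $\mathcal{R}(X,Y)Z=\rho(Y,Z)X-\rho(X,Z)Y$ (which reproduces the paper's component formulas) gives $\mathcal{S}^{\nabla}(X)Y=(\nabla_{X}\rho)(X,X)Y-(\nabla_{X}\rho)(Y,X)X$, whence $\operatorname{tr}\mathcal{S}^{\nabla}(X)=2(\nabla_{X}\rho)(X,X)-(\nabla_{X}\rho)(X,X)=(\nabla_{X}\rho)(X,X)$; your ``up to a nonzero constant'' is in fact the constant $1$.

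The one genuine gap is in your final step, where you equate the vanishing of the \emph{total} symmetrization of $T(X,Y,Z)=(\nabla_{X}\rho)(Y,Z)$ with the \emph{cyclic} condition $(\nabla_{X}\rho)(Y,Z)+(\nabla_{Y}\rho)(Z,X)+(\nabla_{Z}\rho)(X,Y)=0$ via an ``i.e.''. Polarization of the cubic form $(\nabla_{X}\rho)(X,X)$ only kills the six-term symmetrization over $S_{3}$, while cyclic parallelism is the three-term cyclic sum; for a general affine connection $\rho$ need not be symmetric, so the cyclic sum is a priori strictly stronger --- and this is precisely the relevant case, since by Theorem \ref{main1} the Ricci tensor of an affine Szab\'o surface is skew-symmetric. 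The repair is short but must be stated: split $\rho=\rho_{s}+\rho_{a}$ into symmetric and antisymmetric parts. The cyclic sum of $\nabla\rho_{a}$ is totally antisymmetric in its three arguments (for torsion-free $\nabla$ it equals $d\rho_{a}$), hence it vanishes identically on a surface because every $3$-form in dimension two is zero; and the cyclic sum of $\nabla\rho_{s}$ is totally symmetric, so it vanishes if and only if its diagonal $(\nabla_{X}\rho_{s})(X,X)=(\nabla_{X}\rho)(X,X)$ does. With this inserted, your chain ``Szab\'o $\Leftrightarrow$ trace zero for all $X$ $\Leftrightarrow$ $\rho$ cyclic parallel'' closes; observe that the two-dimensionality is used twice --- once so that $\mathcal{R}$ is determined by $\rho$, and once for the vanishing of the $3$-form --- not only once, as your outline suggests.
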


Next we investigate some particular case. The curvature of an affine surface
is encoded by its Ricci tensor. Fixing coordinates $(u_{1},u_{2})$ on $%
\varSigma$ and let $\nabla _{\partial _{i}}\partial _{j}=\Gamma
_{ij}^{k}\partial _{k}$ for $i,j,k=1,2$ where $\Gamma _{ij}^{k}=\Gamma
_{ij}^{k}(u_{1},u_{2})$. Then a straightforward calculation shows that the
components of the curvature tensor $\mathcal{R}$ are given by
\begin{equation}
\mathcal{R}(\partial _{1},\partial _{2})\partial _{1}=\rho _{21}\partial
_{1}-\rho _{11}\partial _{2},\;\;\mbox{and}\;\;\mathcal{R}(\partial
_{1},\partial _{2})\partial _{2}=\rho _{22}\partial _{1}-\rho _{12}\partial
_{2},  \notag
\end{equation}%
where $\rho _{ij},i,j=1,2$ are the components of the Ricci tensor given
\begin{align}
\rho _{21}& =\partial _{1}\Gamma _{12}^{1}-\partial _{2}\Gamma
_{11}^{1}+\Gamma _{12}^{1}\Gamma _{12}^{2}-\Gamma _{11}^{2}\Gamma _{22}^{1},
\notag  \label{Smalabcd} \\
\rho _{11}& =-\big(\partial _{1}\Gamma _{12}^{2}-\partial _{2}\Gamma
_{11}^{2}+\Gamma _{11}^{2}\Gamma _{12}^{1}+\Gamma _{12}^{2}\Gamma
_{12}^{2}-\Gamma _{11}^{1}\Gamma _{12}^{2}-\Gamma _{11}^{2}\Gamma _{22}^{2}%
\big),  \notag \\
\rho _{22}& =\partial _{1}\Gamma _{22}^{1}-\partial _{2}\Gamma
_{12}^{1}+\Gamma _{11}^{1}\Gamma _{22}^{1}+\Gamma _{12}^{1}\Gamma
_{22}^{2}-\Gamma _{12}^{1}\Gamma _{12}^{1}-\Gamma _{12}^{2}\Gamma _{22}^{1},
\notag \\
\rho _{12}& =-\big(\partial _{1}\Gamma _{22}^{2}-\partial _{2}\Gamma
_{12}^{2}+\Gamma _{11}^{2}\Gamma _{22}^{1}-\Gamma _{12}^{1}\Gamma _{12}^{2}%
\big).
\end{align}%
Let $X=\alpha _{1}\partial _{1}+\alpha _{2}\partial _{2}$ be a vector field
on $\varSigma$. It is easy check that the affine Szabó operator $\mathcal{S}%
(X)$ expresses, with respect to the basis $\{\partial _{1},\partial _{2}\}$,
as
\begin{equation}
(\mathcal{S}^{\nabla }(X))=\left(
\begin{array}{cc}
A & B \\
C & D%
\end{array}%
\right) .  \label{eq-mat}
\end{equation}%
where the coefficients $A$, $B$, $C$ and $D$ are given by
\begin{align*}
A& =\alpha _{1}^{2}\alpha _{2}\Big[\partial _{1}\rho _{21}-(\Gamma
_{11}^{1}+\Gamma _{12}^{2})\rho _{21}-\Gamma _{12}^{1}\rho _{11}-\Gamma
_{11}^{2}\rho _{22}\Big] \\
& +\alpha _{1}\alpha _{2}^{2}\Big[\partial _{2}\rho _{21}+\partial _{1}\rho
_{22}-(\Gamma _{12}^{1}+\Gamma _{22}^{2})\rho _{21}-(\rho _{12}+\rho
_{21})\Gamma _{12}^{1}-\Gamma _{22}^{1}\rho _{11}-3\Gamma _{12}^{2}\rho _{22}%
\Big] \\
& +\alpha _{2}^{3}\Big[\partial _{2}\rho _{22}-2\Gamma _{22}^{2}\rho
_{22}-(\rho _{12}+\rho _{21})\Gamma _{22}^{1}\Big], \\
B& =\alpha _{1}^{2}\alpha _{2}\Big[-\partial _{1}\rho _{11}+2\Gamma
_{11}^{1}\rho _{11}+(\rho _{12}+\rho _{21})\Gamma _{11}^{2}\Big] \\
& +\alpha _{1}\alpha _{2}^{2}\Big[-\partial _{2}\rho _{11}-\partial _{1}\rho
_{12}+3\Gamma _{12}^{1}\rho _{11}+\Gamma _{11}^{2}\rho _{22}+(\rho
_{12}+\rho _{21})\Gamma _{12}^{2}+(\Gamma _{11}^{1}+\Gamma _{12}^{2})\rho
_{12}\Big] \\
& +\alpha _{2}^{3}\Big[-\partial _{2}\rho _{12}+\Gamma _{22}^{1}\rho
_{11}+\Gamma _{12}^{2}\rho _{22}+(\Gamma _{12}^{1}+\Gamma _{22}^{2})\rho
_{12}\Big], \\
C& =\alpha _{1}^{3}\Big[-\partial _{1}\rho _{21}+(\Gamma _{11}^{1}+\Gamma
_{12}^{2})\rho _{21}+\Gamma _{12}^{1}\rho _{11}\Big] \\
& +\alpha _{1}^{2}\alpha _{2}\Big[-\partial _{2}\rho _{21}-\partial _{1}\rho
_{22}+(\Gamma _{12}^{1}+\Gamma _{22}^{2})\rho _{21}+\Gamma _{22}^{1}\rho
_{11}+3\Gamma _{12}^{2}\rho _{22}+(\rho _{12}+\rho _{21})\Gamma _{12}^{1}%
\Big] \\
& +\alpha _{1}\alpha _{2}^{2}\Big[-\partial _{2}\rho _{22}+2\Gamma
_{22}^{2}\rho _{22}+(\rho _{12}+\rho _{21})\Gamma _{22}^{1}\Big], \\
D& =\alpha _{1}^{3}\Big[\partial _{1}\rho _{11}-2\Gamma _{11}^{1}\rho
_{11}-(\rho _{12}+\rho _{21})\Gamma _{11}^{2}\Big] \\
& +\alpha _{1}^{2}\alpha _{2}\Big[\partial _{2}\rho _{11}+\partial _{1}\rho
_{12}-3\Gamma _{12}^{1}\rho _{11}-\Gamma _{11}^{2}\rho _{22}-(\Gamma
_{11}^{1}+\Gamma _{12}^{2})\rho _{12}-(\rho _{12}+\rho _{21})\Gamma _{12}^{2}%
\Big] \\
& +\alpha _{1}\alpha _{2}^{2}\Big[\partial _{2}\rho _{12}-\Gamma
_{22}^{1}\rho _{11}-\Gamma _{12}^{2}\rho _{22}-(\Gamma _{12}^{1}+\Gamma
_{22}^{2})\rho _{12}\Big].
\end{align*}

Its characteristic polynomial is given by
\begin{eqnarray*}
P_{\lambda} [\mathcal{S}^{\nabla} (X)]=\lambda^2 -\lambda(A+D) + (AD-BC).
\end{eqnarray*}

Here, we investigate affine surfaces whose Ricci tensor are skew-symmetric.

\begin{theorem}
\label{main1} Let $\nabla $ be an torsion-free affine connection on a
surface $\varSigma$. Then the Ricci tensor of $\nabla $ is skew-symmetric
and nonzero everywhere if and only if $(\varSigma,\nabla )$ is affine Szabó.
\end{theorem}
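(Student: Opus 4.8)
The plan is to use the characterization from Theorem~\ref{TheorSZA2}: $(\varSigma,\nabla)$ is affine Szab\'o if and only if its Ricci tensor $\rho$ is cyclic parallel, i.e.\ the cyclic sum $(\nabla_X\rho)(Y,Z)+(\nabla_Y\rho)(Z,X)+(\nabla_Z\rho)(X,Y)=0$ for all $X,Y,Z$. So the whole statement reduces to proving the equivalence: \emph{$\rho$ is skew-symmetric and nowhere zero} $\iff$ \emph{$\rho$ is cyclic parallel (and, as I expect to find, automatically nonzero).} This lets me avoid grinding through the explicit matrix entries $A,B,C,D$ in \eqref{eq-mat} and instead work with the tensorial condition, reverting to coordinates only where forced.

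First I would handle the forward direction. Assuming $\rho$ is skew-symmetric, on a surface this means $\rho_{11}=\rho_{22}=0$ and $\rho_{12}=-\rho_{21}$, so $\rho$ is determined by the single function $\rho_{21}$ (a multiple of the area/volume form). I would substitute $\rho_{11}=\rho_{22}=0$ into the cyclic-parallel condition and check that each independent component vanishes identically. The key simplification is that a skew $(0,2)$-tensor on a surface is proportional to the volume form, and the cyclic sum of a $2$-form's covariant derivative is (up to sign) its exterior derivative $d\rho$; since $\rho$ is a top-degree form on a $2$-manifold, $d\rho=0$ automatically. Thus skew-symmetry forces cyclic parallelism with essentially no computation, and by Theorem~\ref{TheorSZA2} the surface is affine Szab\'o.

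For the converse I would assume $(\varSigma,\nabla)$ is affine Szab\'o, hence $\rho$ is cyclic parallel, and must deduce that $\rho$ is skew-symmetric and nowhere zero. Decompose $\rho=\rho^{\mathrm{sym}}+\rho^{\mathrm{skew}}$. The cyclic-parallel condition couples the two parts; I would show that the symmetric part must vanish. Here I expect to need the explicit components: setting $P_\lambda=\lambda^2$ from Theorem~\ref{TheorSZA1} forces $A+D=0$ and $AD-BC=0$ as polynomial identities in $\alpha_1,\alpha_2$, and matching coefficients of the monomials $\alpha_1^3,\alpha_1^2\alpha_2,\dots$ yields a system whose solution forces $\rho_{11}=\rho_{22}=0$ and $\rho_{12}+\rho_{21}=0$. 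The trace condition $A+D=0$ contains the terms $(\rho_{12}+\rho_{21})\Gamma_{\cdots}$ and $\partial\rho_{ii}$ that isolate the symmetric part, so this is where the real work lives.

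The main obstacle is the converse, specifically extracting \emph{nowhere-vanishing} from the Szab\'o condition: cyclic parallelism alone permits $\rho\equiv 0$ (a flat connection), which is skew-symmetric but zero, so the nonvanishing claim cannot follow from the tensorial condition alone and must be built into how ``affine Szab\'o'' is being used here. I would therefore read the theorem as asserting that the skew part is exactly the obstruction to the vanishing of $AD-BC$: when $\rho$ is skew and equals $r\,du^1\wedge du^2$, a direct computation should give $AD-BC$ proportional to $r^2$ times a positive combination, so the determinant condition $AD-BC=0$ (needed for $P_\lambda=\lambda^2$) combined with nonvanishing of the relevant coefficient forces the structure, while the ``nonzero everywhere'' hypothesis guarantees $\mathcal{S}^\nabla$ is genuinely nilpotent rather than zero. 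I would reconcile this by showing that if $\rho$ is skew and nonzero the operator is nilpotent nonzero, and conversely that the Szab\'o property with nonzero curvature forces skewness; the delicate point is tracking exactly where the ``nonzero everywhere'' hypothesis enters, and I would flag that the equivalence is really between \emph{nonzero skew-symmetric} Ricci and \emph{nonflat} affine Szab\'o.
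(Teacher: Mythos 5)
Your forward direction is correct and takes a genuinely different (and cleaner) route than the paper. The paper's proof simply substitutes $\rho_{11}=\rho_{22}=0$, $\rho_{12}=-\rho_{21}$ into \eqref{eq-mat} and observes that the resulting operator is nilpotent (with skewness one gets $D=-A$, $B=(\alpha_2/\alpha_1)A$, $C=-(\alpha_1/\alpha_2)A$, so $AD-BC=0$). You instead invoke Theorem~\ref{TheorSZA2} together with the observation that, for a skew $\rho$ and a torsion-free connection, the cyclic sum of $\nabla\rho$ is totally antisymmetric and equals $d\rho$, a $3$-form on a surface, hence identically zero. That argument is airtight, and it makes visible something the paper's computation obscures: skew-symmetry alone, without the ``nonzero everywhere'' hypothesis, already yields affine Szab\'o.

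The converse, however, contains a genuine gap --- and it is precisely the same gap as in the paper's own one-line converse (``which is possible only if $\rho_{11}=\rho_{22}=0$ and $\rho_{12}=-\rho_{21}$'', asserted without proof). Your plan to match coefficients of the polynomial identities $A+D\equiv 0$, $AD-BC\equiv 0$ does not produce algebraic constraints on $\rho$; it produces the cyclic-parallel equations, i.e.\ first-order differential constraints. Concretely,
\begin{equation*}
\text{coefficient of }\alpha_1^{3}\ \text{in}\ A+D
=\partial_1\rho_{11}-2\Gamma_{11}^{1}\rho_{11}-(\rho_{12}+\rho_{21})\Gamma_{11}^{2}
=(\nabla_{1}\rho)_{11},
\end{equation*}
and likewise the $\alpha_1^{2}\alpha_2$-coefficient equals $(\nabla_{1}\rho)_{12}+(\nabla_{1}\rho)_{21}+(\nabla_{2}\rho)_{11}$; so trace-vanishing for all $X$ is exactly Theorem~\ref{TheorSZA2} over again, and it admits non-skew solutions. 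Any connection with parallel Ricci tensor --- e.g.\ the Levi-Civita connection of a constant-curvature surface --- has $\mathcal{S}^{\nabla}(X)=(\nabla_X\mathcal{R})(\cdot,X)X=0$ for every $X$, hence is affine Szab\'o with characteristic polynomial $\lambda^{2}$ as in Theorem~\ref{TheorSZA1}, yet its Ricci tensor is symmetric and nowhere zero. This also defeats the repair you flag at the end: the round sphere is nonflat, so weakening the statement to ``nonzero skew Ricci iff \emph{nonflat} affine Szab\'o'' still fails. Your instinct that the ``nonzero everywhere'' conclusion cannot be extracted from the Szab\'o condition was correct, but the obstruction is larger than the flat case $\rho\equiv 0$: the step from ``trace and determinant vanish identically'' to ``$\rho$ is skew'' is not merely unproven by your coefficient-matching plan, it is false, so the converse cannot be completed along this route (nor along the paper's).
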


\begin{proof}
If the Ricci tensor of $\nabla $ is skew-symmetric, that is, $\rho
_{11}=\rho _{22}=0$ and $\rho _{12}=-\rho _{21}$. Then the Szabó operator is
nilpotent.

Conversely, if $(\varSigma,\nabla )$ is affine Szabó then the trace and
determinant of (\ref{eq-mat}) will be zero, which is possible only if $\rho
_{11}=\rho _{22}=0$ and $\rho _{12}=-\rho _{21}$.
\end{proof}

The investigation of affine connections with skew-symmetric Ricci tensor on
surfaces has been extremely attractive and fruitful over the recent years.
We refer to the paper \cite{Derdzinski2008} by Derdzinski for further
details. Taking into account the simplified Wong's theorem \cite[Th 4.2]%
{Wong} given in \cite{Derdzinski2008}, we have the following:

\begin{theorem}
\label{main2} If every point of an affine surface $\varSigma$ has a
neighborhood $U$ with coordinates $(u_{1},u_{2})$ in which the component
functions of a torsion-free affine connection $\nabla $ are $\Gamma
_{11}^{1}=-\partial _{1}\varphi $, $\Gamma _{22}^{2}=\partial _{2}\varphi $,
for some function $\varphi $, $\Gamma _{jk}^{l}=0$, unless $j=k=l$, then $(%
\varSigma,\nabla )$ is affine Szabó.
\end{theorem}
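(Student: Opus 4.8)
The plan is to reduce the claim to Theorem~\ref{main1} by computing the Ricci tensor explicitly for the given connection and verifying that it is skew-symmetric and nonzero. The hypothesis prescribes a very sparse connection: the only nonvanishing Christoffel symbols are $\Gamma_{11}^1 = -\partial_1\varphi$ and $\Gamma_{22}^2 = \partial_2\varphi$, while $\Gamma_{jk}^l = 0$ whenever the three indices are not all equal. In particular $\Gamma_{12}^1 = \Gamma_{12}^2 = \Gamma_{11}^2 = \Gamma_{22}^1 = 0$. First I would substitute these values into the four displayed formulas for the Ricci components $\rho_{21}, \rho_{11}, \rho_{22}, \rho_{12}$ given in~(\ref{Smalabcd}).

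The key observation is that most terms in each $\rho_{ij}$ are products involving the off-diagonal symbols that vanish, so the computation collapses dramatically. Carrying this out, I expect $\rho_{11}$ and $\rho_{22}$ to reduce to zero: every surviving term in $\rho_{11} = -\big(\partial_1\Gamma_{12}^2 - \partial_2\Gamma_{11}^2 + \cdots\big)$ contains a factor that is one of the vanishing Christoffel symbols or a derivative thereof, and similarly for $\rho_{22}$. For the off-diagonal components, $\rho_{21} = \partial_1\Gamma_{12}^1 - \partial_2\Gamma_{11}^1 + \Gamma_{12}^1\Gamma_{12}^2 - \Gamma_{11}^2\Gamma_{22}^1$ collapses to $-\partial_2\Gamma_{11}^1 = \partial_2\partial_1\varphi$, and $\rho_{12} = -\big(\partial_1\Gamma_{22}^2 - \partial_2\Gamma_{12}^2 + \cdots\big)$ collapses to $-\partial_1\Gamma_{22}^2 = -\partial_1\partial_2\varphi$. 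By equality of mixed partials for the smooth function $\varphi$, this gives $\rho_{12} = -\rho_{21}$, establishing skew-symmetry.

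Having shown $\rho_{11} = \rho_{22} = 0$ and $\rho_{12} = -\rho_{21} = -\partial_1\partial_2\varphi$, the Ricci tensor is skew-symmetric. To invoke Theorem~\ref{main1} I then need the nonvanishing condition, i.e.\ that $\rho_{21}$ is nonzero everywhere; this is exactly the requirement $\partial_1\partial_2\varphi \neq 0$, which I would state as the hypothesis making the connection genuinely non-flat. Once skew-symmetry and nonvanishing are in hand, Theorem~\ref{main1} immediately yields that $(\varSigma,\nabla)$ is affine Szab\'o, completing the proof.

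The main obstacle, such as it is, is bookkeeping rather than conceptual: one must be careful to track which of the many product terms in the Ricci formulas actually survive, since a single overlooked nonzero contribution would spoil the skew-symmetry. A secondary subtlety is the nonvanishing hypothesis—if $\partial_1\partial_2\varphi$ vanishes at some point the Ricci tensor degenerates there and Theorem~\ref{main1} no longer applies, so the cleanest statement implicitly assumes $\varphi$ is chosen so that $\partial_1\partial_2\varphi \neq 0$ on $U$. Modulo that genericity condition the argument is a direct substitution followed by an appeal to the already-established equivalence.
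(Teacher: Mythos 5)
Your Ricci computation is correct and is exactly what the paper intends: with only $\Gamma_{11}^{1}=-\partial_{1}\varphi$ and $\Gamma_{22}^{2}=\partial_{2}\varphi$ nonzero, the formulas (\ref{Smalabcd}) collapse to $\rho_{11}=\rho_{22}=0$ and $\rho_{21}=\partial_{2}\partial_{1}\varphi=-\rho_{12}$, so the Ricci tensor is skew-symmetric; the paper's own proof is nothing more than this one-line observation followed by the appeal to Theorem~\ref{main1}. The genuine problem is your final paragraph, where you condition the conclusion on the ``genericity'' hypothesis $\partial_{1}\partial_{2}\varphi\neq 0$ on $U$. That hypothesis appears nowhere in the statement of Theorem~\ref{main2}, so as written you have proved a strictly weaker theorem: your argument says nothing at points where $\partial_{1}\partial_{2}\varphi$ vanishes.

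The fix is that the extra hypothesis is unnecessary, because the direction of Theorem~\ref{main1} you actually need -- skew-symmetric Ricci implies affine Szab\'o -- does not use the nonvanishing clause at all. Once $\rho_{11}=\rho_{22}=0$ and $\rho_{12}=-\rho_{21}$, set $P=\partial_{1}\rho_{21}-(\Gamma_{11}^{1}+\Gamma_{12}^{2})\rho_{21}$ and $Q=\partial_{2}\rho_{21}-(\Gamma_{12}^{1}+\Gamma_{22}^{2})\rho_{21}$; the entries of (\ref{eq-mat}) become $A=\alpha_{1}\alpha_{2}(\alpha_{1}P+\alpha_{2}Q)$, $B=\alpha_{2}^{2}(\alpha_{1}P+\alpha_{2}Q)$, $C=-\alpha_{1}^{2}(\alpha_{1}P+\alpha_{2}Q)$, $D=-A$, whence $A+D=0$ and $AD-BC=0$ identically, so $P_{\lambda}[\mathcal{S}^{\nabla}(X)]=\lambda^{2}$ for every $X$ and Theorem~\ref{TheorSZA1} gives the Szab\'o property at every point -- including points where $\rho$ vanishes, where the Szab\'o operator is simply zero and hence still nilpotent. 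The ``nonzero everywhere'' clause in Theorem~\ref{main1} is only relevant to the converse implication; importing it here turned a complete argument into a conditional one. Delete the genericity assumption and invoke the forward implication (skew-symmetry $\Rightarrow$ nilpotent Szab\'o operator $\Rightarrow$ affine Szab\'o), and your proof is complete and identical in substance to the paper's.
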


\begin{proof}
It easy to show that the Ricci tensor of $\nabla $ is skew-symmetric.
\end{proof}

A Lagrangian $L:U\rightarrow {\mathbb{R}}$ in a manifold $\varSigma$ is a
function on a nonempty open set $U\subset T\varSigma$. A Lagrangian $%
L:U\rightarrow{\mathbb{R}} $ gives rise to equations of motion, which are
the Euler-Lagrange equations, imposed on curves $t\rightarrow y(t)\in %
\varSigma$, the velocity $t\rightarrow v(t)\in T\varSigma$, lies entirely in
$U$. A fractional-linear function in a two-dimensional real vector space $%
\Pi $ is a rational function of the form $\alpha/\beta $, defined on a
nonempty open subset of $\Pi \backslash \ker \beta $, where $\alpha ,\beta
\in \Pi ^{\ast }$are linearly independent functionals. By using \cite[Th 11.1%
]{Derdzinski2008} and Theorem \ref{main1}, we have

\begin{theorem}
Let $\nabla $ be an torsion-free affine connection on a surface $\varSigma$.
If every point in $T\varSigma \backslash \varSigma$ has a neighborhood $U$
with a fractional-linear Lagrangian $L:U\rightarrow {\mathbb{R}}$ such that
the solutions of the Euler-Lagrange equations for $L$ coincide with those
geodesics of $\nabla $ which, lifted to $T\varSigma$, lie in $U$, then $(%
\varSigma,\nabla )$ is affine Szabó.
\end{theorem}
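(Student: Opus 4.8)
The plan is to reduce this statement to Theorem \ref{main1}, which already establishes that $(\varSigma,\nabla)$ is affine Szab\'o precisely when the Ricci tensor of $\nabla$ is skew-symmetric and nonzero everywhere. Thus the entire task becomes: show that the existence of a fractional-linear Lagrangian whose Euler--Lagrange solutions coincide with the (lifted) geodesics of $\nabla$ forces the Ricci tensor of $\nabla$ to be skew-symmetric and nowhere vanishing. This is exactly the content that Wong-type / Derdzinski-type results are designed to supply, so the strategy is to invoke \cite[Th 11.1]{Derdzinski2008} as a black box and simply connect its hypotheses and conclusions to the skew-symmetry condition.

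First I would recall precisely what \cite[Th 11.1]{Derdzinski2008} asserts. The expected shape is a biconditional (or at least a one-directional implication) characterizing those torsion-free affine surfaces that admit a local fractional-linear Lagrangian whose Euler--Lagrange equations reproduce the geodesic flow, with the characterization being phrased in terms of the Ricci tensor being skew-symmetric and nonzero (i.e., the connection being \emph{Ricci-recurrent with skew-symmetric Ricci}, in Derdzinski's language, or equivalently what he calls connections arising from such variational data). So the key step is to quote the theorem in the form: \emph{every point of $T\varSigma\backslash\varSigma$ has a neighborhood carrying such a fractional-linear Lagrangian reproducing the geodesics if and only if the Ricci tensor of $\nabla$ is skew-symmetric and nonzero everywhere.} Granting this, the hypothesis of our statement gives skew-symmetry and nonvanishing of $\rho$ directly.

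Next I would feed this conclusion into Theorem \ref{main1}. Since that theorem asserts the equivalence ``Ricci skew-symmetric and nonzero everywhere $\iff$ affine Szab\'o,'' the skew-symmetry and nonvanishing extracted from the Lagrangian hypothesis immediately yield that $(\varSigma,\nabla)$ is affine Szab\'o. Concretely, the proof would be one or two lines: by \cite[Th 11.1]{Derdzinski2008} the stated Lagrangian hypothesis is equivalent to $\rho_{11}=\rho_{22}=0$, $\rho_{12}=-\rho_{21}\neq 0$ everywhere; by Theorem \ref{main1} this is equivalent to $(\varSigma,\nabla)$ being affine Szab\'o, which is the desired conclusion.

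The main obstacle I anticipate is not any computation but rather the faithful matching of hypotheses: one must verify that Derdzinski's ``fractional-linear Lagrangian reproducing the geodesics'' condition really is characterized by \emph{skew-symmetric and nowhere-zero} Ricci, and not by some weaker or differently normalized condition (for instance, skew-symmetry without the nonvanishing, or skew-symmetry up to a projective change of connection). If Derdzinski's statement only gives skew-symmetry, one would additionally have to argue that the fractional-linearity (linear independence of the functionals $\alpha,\beta$ defining $\alpha/\beta$) forces $\rho$ to be nonzero, since a fractional-linear Lagrangian is genuinely nonaffine. I would therefore devote the bulk of the write-up to citing the precise form of \cite[Th 11.1]{Derdzinski2008} and checking that its output is exactly the hypothesis of Theorem \ref{main1}; once that alignment is secured, the remainder is an immediate appeal to Theorem \ref{main1}.
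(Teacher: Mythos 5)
Your proposal is correct and is exactly the paper's argument: the paper proves this theorem by combining \cite[Th 11.1]{Derdzinski2008}, which characterizes the fractional-linear-Lagrangian hypothesis via a skew-symmetric, nowhere-zero Ricci tensor, with Theorem \ref{main1}. Your added caution about matching Derdzinski's hypotheses precisely (including the nonvanishing of $\rho$) is sensible but does not change the route.
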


\begin{definition}
\textrm{\cite{Wong}} A tensor field $T$ is said to be recurrent if there
exists a $1$-form $\alpha $ such that $\nabla T$ = $\alpha \otimes T$, where
$\nabla $ is an affine connection. In particular, an affine surface $(%
\varSigma,\nabla )$ is said to be recurrent if its Ricci tensor is recurrent.
\end{definition}

\begin{theorem}
Let $(\varSigma,\nabla )$ be an affine Szabó surface. Then $(\varSigma%
,\nabla )$ is recurrent if and only if around each point there exists a
coordinate system $(U,u^{h})$ with the non-zero components of $\nabla $ are
\begin{equation*}
\Gamma _{11}^{1}=-\partial _{1}\varphi ,\quad \Gamma _{22}^{2}=\partial
_{2}\varphi
\end{equation*}%
for some scalar function $\varphi $ such that $\partial _{2}\partial
_{1}\varphi \not=0$. Moreover, $(\varSigma,\nabla )$ is not locally
symmetric.
\end{theorem}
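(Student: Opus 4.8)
The plan is to treat the equivalence as two implications, preceded by a structural remark that trivialises the recurrence itself. Because $\varSigma$ is affine Szab\'o, Theorem~\ref{main1} forces the Ricci tensor $\rho$ to be skew-symmetric and nowhere zero. Covariant differentiation commutes with symmetrisation, so the symmetric part of $\nabla_k\rho$ equals $\nabla_k$ applied to the symmetric part of $\rho$, which vanishes; hence each $\nabla_k\rho$ is again skew-symmetric. On a surface the skew-symmetric $(0,2)$-tensors form a rank-one bundle, so $\nabla_k\rho$ must be a scalar multiple of the nowhere-zero $\rho$. Therefore $\nabla\rho=\alpha\otimes\rho$ holds automatically, with $\alpha_k=(\nabla_k\rho)_{12}/\rho_{12}=\partial_k\log|\rho_{12}|-(\Gamma_{k1}^1+\Gamma_{k2}^2)$, and every affine Szab\'o surface is already recurrent. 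The genuine content of the equivalence is thus the passage to the stated coordinate normal form.

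For the direction assuming the coordinate form, I would substitute $\Gamma_{11}^1=-\partial_1\varphi$, $\Gamma_{22}^2=\partial_2\varphi$ and all remaining symbols zero into the components of $\rho$ recorded above. A direct computation yields $\rho_{11}=\rho_{22}=0$ and $\rho_{21}=-\rho_{12}=\partial_1\partial_2\varphi$, so $\rho$ is skew-symmetric and, by the hypothesis $\partial_1\partial_2\varphi\neq0$, nowhere zero; this is consistent with Theorem~\ref{main2}, and the structural remark then furnishes recurrence with $\alpha$ computed from $\rho_{12}=-\partial_1\partial_2\varphi$.

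For the converse, assuming recurrence, the main task is to produce the normal form, and this is the step I expect to be the real obstacle, since it requires the local classification of recurrent affine surfaces rather than a bare index manipulation. Starting from a skew-symmetric, nowhere-zero, recurrent Ricci tensor, I would invoke the simplified Wong theorem (\cite[Th.~4.2]{Wong} in the form given in \cite{Derdzinski2008}) to obtain, about each point, coordinates $(u_1,u_2)$ in which $\nabla$ reduces to $\Gamma_{11}^1=-\partial_1\varphi$, $\Gamma_{22}^2=\partial_2\varphi$ with the other Christoffel symbols zero. The nonvanishing of $\rho$ then translates, by the computation of the previous paragraph, into $\partial_1\partial_2\varphi\neq0$.

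Finally, for the assertion that $\varSigma$ is not locally symmetric, I would use that on a surface $\nabla\mathcal{R}=0$ is equivalent to $\nabla\rho=0$, hence to $\alpha=0$. Writing $\psi=\partial_1\partial_2\varphi\neq0$ in the normal coordinates, the formula for $\alpha$ gives $\alpha_1=\partial_1(\log|\psi|+\varphi)$ and $\alpha_2=\partial_2(\log|\psi|-\varphi)$. Were $\alpha$ to vanish, $\log|\psi|+\varphi$ would be a function of $u_2$ alone and $\log|\psi|-\varphi$ a function of $u_1$ alone; subtracting shows $2\varphi$ is a sum of a function of $u_1$ and a function of $u_2$, whence $\partial_1\partial_2\varphi=0$, contradicting $\psi\neq0$. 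Thus $\alpha\neq0$, so $\nabla\rho\neq0$ and the surface is not locally symmetric.
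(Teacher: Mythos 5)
Your proposal is correct, and its load-bearing step is the same as the paper's: after Theorem \ref{main1} reduces the hypothesis to a skew-symmetric, nowhere-zero Ricci tensor, both you and the authors obtain the normal form $\Gamma_{11}^{1}=-\partial_{1}\varphi$, $\Gamma_{22}^{2}=\partial_{2}\varphi$ from Wong's classification of non-flat recurrent surfaces \cite[Th 4.2]{Wong} in the simplified form of \cite{Derdzinski2008}, and both verify $\rho_{21}=-\rho_{12}=\partial_{2}\partial_{1}\varphi\neq 0$ by the curvature formulas of Section \ref{Szabo}. You add two things the paper does not contain. First, your structural remark: since covariant differentiation preserves skew-symmetry and skew-symmetric $(0,2)$-tensors on a surface form a line bundle, $\nabla\rho=\alpha\otimes\rho$ holds automatically with $\alpha_{k}=\partial_{k}\log|\rho_{12}|-(\Gamma_{k1}^{1}+\Gamma_{k2}^{2})$ (smooth because $\rho_{12}$ never vanishes); this shows every affine Szab\'o surface is already recurrent, so the stated ``if and only if'' is really a local structure theorem, and it also supplies the ``normal form $\Rightarrow$ recurrent'' direction, which the paper's proof leaves implicit. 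Second, and more substantively, your final step repairs a genuine thinness in the paper's own argument: the paper infers ``not locally symmetric'' solely from $\rho_{21}=\partial_{2}\partial_{1}\varphi\neq 0$, but a nonvanishing Ricci tensor does not by itself exclude $\nabla\rho=0$. Your computation $\alpha_{1}=\partial_{1}(\log|\psi|+\varphi)$, $\alpha_{2}=\partial_{2}(\log|\psi|-\varphi)$ with $\psi=\partial_{1}\partial_{2}\varphi$, the observation that $\alpha=0$ would force $2\varphi$ to split as a function of $u_{1}$ plus a function of $u_{2}$ and hence $\psi=0$, and the equivalence $\nabla\mathcal{R}=0\Leftrightarrow\nabla\rho=0$ in dimension two (via $\mathcal{R}(X,Y)Z=\rho(Y,Z)X-\rho(X,Z)Y$) constitute the argument that is actually needed, and all of these computations check out against the component formulas in the paper. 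In short: same route through Wong's theorem, but your version is logically complete where the paper's is elliptical.
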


\begin{proof}
Consider the Ricci tensor $\rho =\rho _{a}+\rho _{s}$, where $\rho _{a}$ is
the antisymmetric part of $\rho $ and $\rho _{s}$ is the symmetric part of $%
\rho $. Then by using Theorem \ref{main1}, we can say that $(\varSigma%
,\nabla )$ is an affine Szabó if and only if the Ricci tensor of $\nabla $
is skew-symmetric and nonzero everywhere. Then it follows from \cite[Th 4.2]%
{Wong} that one of the three possibility for a non-flat recurrent affine
surface is the one in which around each point there exists a coordinate
system $(U,u^{h})$ with the non-zero components of $\nabla $ are
\begin{equation*}
\Gamma _{11}^{1}=-\partial _{1}\varphi ,\quad \Gamma _{22}^{2}=\partial
_{2}\varphi
\end{equation*}%
for some scalar function $\varphi $ such that $\partial _{2}\partial
_{1}\varphi \not=0$. Now, it is easy to calculate that $\rho _{21}=-\rho
_{12}=\partial _{2}\partial _{1}\varphi $, which is never zero. So, $(%
\varSigma,\nabla )$ is not locally symmetric.
\end{proof}

By using the result of \cite[Th 2.2]{Wong} and Theorem \ref{main1}, we can
say that

\begin{theorem}
Let $(\varSigma,\nabla )$ be an affine Szabó recurrent surface. Then the
recurrence covector of a recurrence tensor is not locally a gradient.
\end{theorem}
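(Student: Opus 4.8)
The plan is to reduce to the explicit normal form supplied by the preceding theorem and then compute the recurrence covector directly. By Theorem \ref{main1}, an affine Szab\'o surface has skew-symmetric, nowhere-vanishing Ricci tensor, and the previous theorem guarantees that about each point there are coordinates $(u_1,u_2)$ in which the only nonzero Christoffel symbols are $\Gamma_{11}^1 = -\partial_1\varphi$ and $\Gamma_{22}^2 = \partial_2\varphi$, with $\rho_{21} = -\rho_{12} = \partial_1\partial_2\varphi =: R$ and $R\neq 0$. I would take this as the starting point, so that the recurrence covector can be read off from the recurrence condition $\nabla\rho = \alpha\otimes\rho$ written in these coordinates.

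First I would write the recurrence equation componentwise as $(\nabla_k\rho)_{ij} = \alpha_k\rho_{ij}$, with $(\nabla_k\rho)_{ij} = \partial_k\rho_{ij} - \Gamma_{ki}^l\rho_{lj} - \Gamma_{kj}^l\rho_{il}$. Because only $\rho_{12}$ and $\rho_{21}$ are nonzero and only $\Gamma_{11}^1,\Gamma_{22}^2$ survive, the equations for $(i,j)=(1,1)$ and $(i,j)=(2,2)$ hold automatically (both sides vanish), which confirms that $\alpha$ is well-defined; the equations for $(i,j)=(1,2)$ then determine it. A short computation gives
\[
\alpha_1 = \partial_1\log|R| + \partial_1\varphi, \qquad \alpha_2 = \partial_2\log|R| - \partial_2\varphi,
\]
and the $(i,j)=(2,1)$ equations are seen to reproduce the same values, so the system is consistent. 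This is exactly the role played by Wong's Theorem 2.2 in the recurrent setting, giving a reference-based identification of $\alpha$; the direct computation above is self-contained and serves the same purpose.

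The final step is to test closedness. A $1$-form on a surface is locally a gradient precisely when it is closed (Poincar\'e lemma on a simply connected coordinate neighborhood), so I would evaluate $d\alpha$ through $\partial_1\alpha_2 - \partial_2\alpha_1$. The mixed second derivatives of $\log|R|$ cancel, while the two $\varphi$-contributions enter with the same sign, leaving $\partial_1\alpha_2 - \partial_2\alpha_1 = -2\,\partial_1\partial_2\varphi = -2R$. Since $R = \partial_1\partial_2\varphi \neq 0$ by the normal-form hypothesis, $d\alpha\neq 0$; hence $\alpha$ is not closed and therefore not locally a gradient, which is the assertion.

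The main obstacle I anticipate is purely bookkeeping: making sure the covariant-derivative components are formed with the correct index placement so that the two independent equations really yield the same $\alpha_1,\alpha_2$ (consistency), and then verifying that the $\log|R|$ terms cancel in $d\alpha$ while the $\varphi$ terms reinforce. The conceptual content is light once the normal form is in hand; the outcome is essentially that the $R$-term, which is precisely the nonvanishing skew part of the Ricci tensor forced by the Szab\'o condition, survives as the obstruction to exactness of $\alpha$.
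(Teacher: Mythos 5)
Your computation is correct, and I verified it independently: in the normal form $\Gamma_{11}^{1}=-\partial_{1}\varphi$, $\Gamma_{22}^{2}=\partial_{2}\varphi$ one has $\rho_{21}=-\rho_{12}=\partial_{1}\partial_{2}\varphi=:R\neq 0$, $\rho_{11}=\rho_{22}=0$, the recurrence equations $(\nabla_{k}\rho)_{ij}=\alpha_{k}\rho_{ij}$ are consistent and force
\begin{equation*}
\alpha_{1}=\partial_{1}\log|R|+\partial_{1}\varphi,\qquad \alpha_{2}=\partial_{2}\log|R|-\partial_{2}\varphi,
\end{equation*}
whence $\partial_{1}\alpha_{2}-\partial_{2}\alpha_{1}=-2\partial_{1}\partial_{2}\varphi=-2R\neq 0$, so $\alpha$ is not closed and hence not locally a gradient (the statement is local, so the Poincar\'e lemma applies on the chart). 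Your route, however, is genuinely different from the paper's: the paper offers no computation at all for this theorem and obtains it in one line by citing Wong's Theorem 2.2 together with Theorem \ref{main1} (the equivalence of the affine Szab\'o condition with a skew-symmetric, nowhere-zero Ricci tensor). Your argument instead leans on the coordinate normal form established in the preceding theorem (itself extracted from Wong's Th.\ 4.2) and then identifies the recurrence covector explicitly. What the citation buys the paper is brevity and chart-independence; what your computation buys is an explicit formula for $\alpha$ and a transparent identification of the obstruction, namely $d\alpha=-2R\,du_{1}\wedge du_{2}$, i.e.\ (up to the factor $-2$) precisely the nonvanishing skew part of the Ricci tensor guaranteed by the Szab\'o condition. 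Two minor remarks: the well-definedness of $\alpha$ follows from $\rho\neq 0$ (which makes $\alpha$ unique), not from the $(1,1)$ and $(2,2)$ equations, which are trivially $0=0$; and since the preceding theorem is an equivalence, your reduction to the normal form covers every affine Szab\'o recurrent surface, so there is no loss of generality in your starting point.
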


\section{The deformed Riemannian extensions of an affine Szab\'o manifold}

\label{Main}

A pseudo-Riemannian manifold $(M,g)$ is said to be Szabó if the Szabó
operators $\mathcal{S}(X)=(\nabla _{X}R)(\cdot ,X)X$ has constant
eigenvalues on the unit pseudo-sphere bundles $S^{\pm }(TM)$. Any Szabó
manifold is locally symmetric in the Riemannian \cite{Szabo} and the
Lorentzian \cite{GilkeyStravrov2002} setting but the higher signature case
supports examples with nilpotent Szabó operators (cf. \cite%
{GilkeyIvanovaZhang2003} and the references therein). Now, we will prove the
following result:

\begin{theorem}
\label{main3} Let $(M,\nabla )$ be a $2$-dimensional smooth torsion-free
affine manifold. Then the following assertions are equivalent:

\begin{enumerate}
\item $(M,\nabla)$ is an affine Szab\'o manifold.

\item The deformed Riemannian extension $(T^*M,g_{(\nabla,\phi)})$ of $%
(M,\nabla)$ is a pseudo-Riemannian nilpotent Szab\'o manifold of neutral
signature.
\end{enumerate}
\end{theorem}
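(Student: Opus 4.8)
The plan is to establish the equivalence by computing the Szab\'o operator of the deformed Riemannian extension explicitly in the induced coordinates $(u_1,u_2,u_{1'},u_{2'})$ and relating its characteristic polynomial to the affine Szab\'o condition via Theorem \ref{TheorSZA1}. First I would compute the Levi-Civita connection of $g_{(\nabla,\phi)}$ from the block form of the metric. The key structural fact, already exploited in the literature on deformed Riemannian extensions, is that the nonzero Christoffel symbols of $g_{(\nabla,\phi)}$ involve only the $\Gamma_{ij}^k$ of $\nabla$, their first derivatives, the components $\phi_{ij}$, and the fibre coordinates $u_{k'}$ linearly; moreover the ``lower'' directions $\partial_{i'}$ span a parallel null distribution. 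From there I would compute the curvature tensor $R$ of $g_{(\nabla,\phi)}$ and its covariant derivative $\nabla R$, and then assemble the Szab\'o operator $\mathcal{S}(\widetilde X)\widetilde Y=(\nabla_{\widetilde X}R)(\widetilde Y,\widetilde X)\widetilde X$ for a general tangent vector $\widetilde X=\sum_i \alpha_i\partial_i+\sum_i \alpha_{i'}\partial_{i'}$.

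Next I would show that $\mathcal{S}(\widetilde X)$ is represented, with respect to the coordinate frame, by a block-triangular matrix whose only possibly nonzero block reproduces, up to the lift, the affine Szab\'o operator $\mathcal{S}^\nabla(X)$ of the base, where $X=\sum_i\alpha_i\partial_i$ is the horizontal part of $\widetilde X$. This is the crux: because the fibre directions are null and parallel, contributions of $\phi$ and of the $\alpha_{i'}$ should drop out of the characteristic polynomial, so that $P_\lambda[\mathcal{S}(\widetilde X)]$ equals $P_\lambda[\mathcal{S}^\nabla(X)]$ raised to the appropriate power (here $\lambda^2$ times the base polynomial, giving total degree $4=2n$). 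Granting this reduction, the equivalence is immediate: by Theorem \ref{TheorSZA1}, $(M,\nabla)$ is affine Szab\'o if and only if $P_\lambda[\mathcal{S}^\nabla(X)]=\lambda^2$ for all $X$, which holds if and only if $P_\lambda[\mathcal{S}(\widetilde X)]=\lambda^4$ for all $\widetilde X$, i.e.\ if and only if the Szab\'o operators of $g_{(\nabla,\phi)}$ are all nilpotent with vanishing eigenvalues on $S^\pm(T(T^*M))$. Nilpotency in turn forces constant (zero) eigenvalues, so $(T^*M,g_{(\nabla,\phi)})$ is a nilpotent Szab\'o manifold of neutral signature $(2,2)$.

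The main obstacle will be the bookkeeping in computing $\nabla R$ and $\mathcal{S}(\widetilde X)$ for the full $4$-dimensional metric, and in particular verifying cleanly that the deformation tensor $\phi$ and the fibre coordinates contribute nothing to the characteristic polynomial. I expect the argument to hinge on the Walker structure: since $\mathcal{D}=\mathrm{Span}\{\partial_{i'}\}$ is a parallel degenerate distribution, $R$ and $\nabla R$ preserve the associated filtration, which makes $\mathcal{S}(\widetilde X)$ strictly upper-triangular with respect to the splitting into horizontal and vertical parts when the base is affine Szab\'o. I would organize the computation so that the $\phi$-dependent terms appear only in off-diagonal blocks that do not affect eigenvalues, reducing the verification to the two-dimensional base identities already recorded in the matrix entries $A,B,C,D$ preceding Theorem \ref{main1}. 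The converse direction follows by reading the same identification backwards: nilpotency of every $\mathcal{S}(\widetilde X)$ specialized to horizontal $\widetilde X=X^C$ forces $P_\lambda[\mathcal{S}^\nabla(X)]=\lambda^2$, whence $(M,\nabla)$ is affine Szab\'o by Theorem \ref{TheorSZA1}.
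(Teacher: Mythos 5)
Your proposal follows essentially the same route as the paper's proof: compute the Levi-Civita connection and curvature of $g_{(\nabla,\phi)}$, exploit the Walker structure to show the Szab\'o operator of the extension is block-triangular with the affine Szab\'o operator on the diagonal (so $\phi$ and the fibre components affect only the off-diagonal block and drop out of the characteristic polynomial), and apply Theorem \ref{TheorSZA1} in both directions. One small correction: the second diagonal block is ${}^{t}\mathcal{S}^{\nabla}(X)$, so $P_{\lambda}[\tilde{\mathcal{S}}(\tilde{X})]$ is the \emph{square} of $P_{\lambda}[\mathcal{S}^{\nabla}(X)]$ rather than $\lambda^{2}$ times it --- under the nilpotency hypothesis both expressions give $\lambda^{4}$, so your conclusion in each direction is unaffected.
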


\begin{proof}
Let $\Gamma _{ij}^{k}$ be the coefficients of the torsion free affine
connection $\nabla $ and $\phi _{ij}$ denote the local components of $\phi $%
. Then, the deformed Riemannian extension of the torsion free affine
connection $\nabla $ is the pseudo-Riemannian metric tensor on $T^{\ast }M$
of signature $(2,2)$ given by
\begin{eqnarray}
g_{(\nabla ,\phi )} &=&\Big(\phi _{11}(u_{1},u_{2})-2u_{3}\Gamma
_{11}^{1}-2u_{4}\Gamma _{11}^{2}\Big)du_{1}\otimes du_{1}  \notag
\label{eq4.1} \\
&&+\Big(\phi _{22}(u_{1},u_{2})-2u_{3}\Gamma _{22}^{1}-2u_{4}\Gamma _{22}^{2}%
\Big)du_{2}\otimes du_{2}  \notag \\
&&+\Big(\phi _{12}(u_{1},u_{2})-2u_{3}\Gamma _{12}^{1}-2u_{4}\Gamma _{12}^{2}%
\Big)(du_{1}\otimes du_{2}+du_{1}\otimes du_{2})  \notag \\
&&+(du_{1}\otimes du_{3}+du_{3}\otimes du_{1}+du_{2}\otimes
du_{4}+du_{4}\otimes du_{2}).
\end{eqnarray}%
A straightforward calculation shows that the non-zero Christoffel symbols $%
\tilde{\Gamma}_{\alpha \beta }^{\gamma }$ of the Levi-Civita connection are
given as follows
\begin{eqnarray*}
\tilde{\Gamma}_{ij}^{k} &=&\Gamma _{ij}^{k};\;\tilde{\Gamma}_{i^{\prime
}j}^{k^{\prime }}=-\Gamma _{jk}^{i};\;\tilde{\Gamma}_{ij^{\prime
}}^{k^{\prime }}=-\Gamma _{ij}^{j} \\
\tilde{\Gamma}_{ij}^{k^{\prime }} &=&\sum_{r=1}^{2}\Big(\partial _{k}\Gamma
_{ij}^{r}-\partial _{i}\Gamma _{jk}^{r}-\partial _{j}\Gamma
_{ik}^{r}+2\sum_{l=1}^{2}\Gamma _{kl}^{r}\Gamma _{ij}^{l}\Big) \\
&&+\frac{1}{2}\Big(\partial _{i}\phi _{jk}+\partial _{j}\phi _{ik}-\partial
_{k}\phi _{ij}\Big)-\sum_{l=1}^{2}\phi _{kl}\Gamma _{ij}^{l}
\end{eqnarray*}%
where $(i,j,k,l,r=1,2)$ and $(i^{\prime }=i+2,j^{\prime }=j+2,k^{\prime
}=k+2,r^{\prime }=r+2)$. The non-zero components of the curvature tensor of $%
(T^{\ast }M,g_{(\nabla ,\phi )})$ up to the usual symmetries are given as
follows (we omit $\widetilde{R}_{kji}^{h^{\prime }}$, as it plays no role in
our considerations)
\begin{equation*}
\widetilde{R}_{kji}^{h}=R_{kji}^{h},\;\;\widetilde{R}_{kji}^{h^{\prime
}},\;\;\widetilde{R}_{kji^{\prime }}^{h^{\prime }}=-R_{kjh}^{i},\;\;%
\widetilde{R}_{k^{\prime }ji}^{h^{\prime }}=R_{hij}^{k},
\end{equation*}%
where $R_{kji}^{h}$ are the components of the curvature tensor of $(M,\nabla
)$. (For more details, see \cite{Calvino2010}.)\newline

Let $\tilde{X}=\alpha _{i}\partial _{i}+\alpha _{i^{\prime }}\partial
_{i^{\prime }}$ be a vector field on $T^{\ast }M$. Then the matrix of the
Szabó operator $\tilde{S}(\tilde{X})$ with respect to the basis $\{\partial
_{i},\partial _{i^{\prime }}\}$ is of the form
\begin{equation*}
\tilde{\mathcal{S}}(\tilde{X})=\left(
\begin{array}{cc}
\mathcal{S}^{\nabla }(X) & 0 \\*
& {}^{t}\mathcal{S}^{\nabla }(X)%
\end{array}%
\right) .
\end{equation*}%
where $\mathcal{S}^{\nabla }(X)$ is the matrix of the affine Szabó operator
on $M$ relative to the basis $\{\partial _{i}\}$. Note that the
characteristic polynomial $P_{\lambda }[\tilde{\mathcal{S}}(\tilde{X})]$ of $%
\tilde{\mathcal{S}}(\tilde{X})$ and $P_{\lambda }[\mathcal{S}^{\nabla }(X)]$
of $\mathcal{S}^{\nabla }(X)$ are related by
\begin{equation*}
P_{\lambda }[\tilde{\mathcal{S}}(\tilde{X})]=P_{\lambda }[\mathcal{S}%
^{\nabla }(X)]\cdot P_{\lambda }[{}^{t}\mathcal{S}^{\nabla }(X)].
\end{equation*}%
Now, if the affine manifold $(M,\nabla )$ is assumed to be affine Szabó,
then $\mathcal{S}^{\nabla }(X)$ has zero eigenvalues for each vector field $%
X $ on $M$. Therefore, it follows from (\ref{eq3.2}) that the eigenvalues of
$\tilde{\mathcal{S}}(\tilde{X})$ vanish for every vector field $\tilde{X}$
on $T^{\ast }M$. Thus $(T^{\ast }M,g_{\nabla })$ is pseudo-Riemannian Szabó
manifold.

Conversely, assume that $(T^{\ast }M,g_{\nabla })$ is an pseudo-Riemannian
Szabó manifold. If $X=\alpha _{i}\partial _{i}$ is an arbitrary vector field
on $M$, then $\tilde{X}=\alpha _{i}\partial _{i}+\frac{1}{2\alpha _{i}}%
\partial _{i^{\prime }}$ is an unit vector field at every point of the zero
section on $T^{\ast }M$. Then from (\ref{eq4.2}), we see that, the
characteristic polynomial $P_{\lambda }[\tilde{\mathcal{S}}(\tilde{X})]$ of $%
\tilde{\mathcal{S}}(\tilde{X})$ is the square of the characteristic
polynomial $P_{\lambda }[\mathcal{S}^{\nabla }(X)]$ of $\mathcal{S}^{\nabla
}(X)$. Since for every unit vector field $\tilde{X}$ on $T^{\ast }M$ the
characteristic polynomial $P_{\lambda }[\tilde{\mathcal{S}}(\tilde{X})]$
should be the same, it follows that for every vector field $X$ on $M$ the
characteristic polynomial $P_{\lambda }[\mathcal{S}^{\nabla }(X)]$ is the
same. Hence $(M,\nabla )$ is affine Szabó.
\end{proof}

For an example, we have the following:

\begin{theorem}
\cite{DialloGupta2020} Let $M=\mathbb{R}^{2}$ and $\nabla $ be the torsion
free connection defined by $\nabla _{\partial _{1}}\partial
_{1}=f_{1}(u_{1})\partial _{2}$ and $\nabla _{\partial _{1}}\partial
_{2}=f_{2}(u_{1})\partial _{2}$. Assume that $f_{1}$ and $f_{2}$ satisfies $%
\partial _{1}b=0$ and $\partial _{2}b=0,$ where $b=\partial
_{1}f_{2}+f_{2}^{2}$. Then the pseudo-Riemannian metric $g_{(\nabla ,\phi )}$
on the cotangent bundle $T^{\ast }M$ of neutral signature $(2,2)$ defined by
setting
\begin{eqnarray*}
g_{(\nabla,\phi)} &=&(\phi _{11}-2u_{4}f_{1})du_{1}\otimes du_{1}+\phi
_{22}du_{2}\otimes du_{2}  \notag \\
&&+(\phi _{12}-2u_{4}f_{2})(du_{1}\otimes du_{2}+du_{1}\otimes du_{2})
\notag \\
&&+(du_{1}\otimes du_{3}+du_{3}\otimes du_{1}+du_{2}\otimes
du_{4}+du_{4}\otimes du_{2}).
\end{eqnarray*}%
is Szab\'o for any symmetric $(0,2)$-tensor field $\phi $.
\end{theorem}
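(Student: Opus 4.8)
The plan is to reduce the whole assertion to the affine side and then invoke Theorem \ref{main3}. Since $M=\mathbb{R}^{2}$ is two-dimensional and the metric $g_{(\nabla,\phi)}$ written in the statement is exactly the deformed Riemannian extension of $\nabla$ by $\phi$, it suffices to show that $(M,\nabla)$ is an affine Szab\'o manifold. Once that is established, Theorem \ref{main3} immediately gives that $(T^{\ast}M,g_{(\nabla,\phi)})$ is a pseudo-Riemannian nilpotent Szab\'o manifold of neutral signature $(2,2)$, and the conclusion is automatically uniform in $\phi$: the block form of $\tilde{\mathcal S}(\tilde X)$ displayed in the proof of Theorem \ref{main3} involves only the affine Szab\'o operator and its transpose, so it is insensitive to the choice of the symmetric $(0,2)$-tensor $\phi$.

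First I would read off the Christoffel symbols from the defining relations $\nabla_{\partial_1}\partial_1=f_1\partial_2$ and $\nabla_{\partial_1}\partial_2=f_2\partial_2$: the only nonzero ones are $\Gamma_{11}^2=f_1(u_1)$ and $\Gamma_{12}^2=\Gamma_{21}^2=f_2(u_1)$, while $\Gamma_{11}^1=\Gamma_{12}^1=\Gamma_{22}^1=\Gamma_{22}^2=0$. Substituting these into the Ricci formulas recorded above and using that $f_1,f_2$ depend on $u_1$ only (so every $\partial_2$-derivative drops out), I would obtain $\rho_{21}=\rho_{22}=\rho_{12}=0$ and $\rho_{11}=-(\partial_1 f_2+f_2^2)=-b$. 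Thus the Ricci tensor is diagonal with the single surviving entry $\rho_{11}=-b$, and one checks that the general deformed extension with $\Gamma_{11}^1=0$ indeed specializes to the metric appearing in the statement.

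Next I would feed these Ricci components into the explicit entries $A,B,C,D$ of the affine Szab\'o operator \eqref{eq-mat}. Nearly every summand carries a factor of $\rho_{21},\rho_{22},\rho_{12}$ or of one of the vanishing symbols $\Gamma_{12}^1,\Gamma_{22}^1,\Gamma_{22}^2$, so the four expressions collapse to $A=C=0$ identically, together with $B=\alpha_1^2\alpha_2\,\partial_1 b+\alpha_1\alpha_2^2\,\partial_2 b$ and $D=-\alpha_1^3\,\partial_1 b-\alpha_1^2\alpha_2\,\partial_2 b$. This is precisely where the hypotheses $\partial_1 b=0$ and $\partial_2 b=0$ are used: under them $B=D=0$ as well, so $\mathcal S^{\nabla}(X)$ is the zero matrix for every $X$. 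Its characteristic polynomial is therefore $P_\lambda[\mathcal S^{\nabla}(X)]=\lambda^2$ for all $X$, whence $(M,\nabla)$ is affine Szab\'o by Theorem \ref{TheorSZA1}; applying Theorem \ref{main3} then finishes the argument.

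I do not expect any genuine obstacle, as the proof is essentially a verification resting on Theorem \ref{main3}. The only delicate point is the bookkeeping in the $A,B,C,D$ formulas, and this is tamed by the observation that a single Ricci component is nonzero, which annihilates the overwhelming majority of the terms; the residual terms are controlled entirely by $\partial_1 b$ and $\partial_2 b$, which the constancy hypothesis on $b$ removes. I would emphasize that constancy of $b$ is doing the essential work and is exactly the condition that forces the trace and determinant of the Szab\'o operator to vanish for \emph{every} direction.
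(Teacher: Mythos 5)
Your proposal is correct and follows essentially the same route the paper intends: the theorem is stated without proof (cited from \cite{DialloGupta2020}) immediately after Theorem \ref{main3} precisely as an application of it, namely verify that $(M,\nabla)$ is affine Szab\'o and then transfer to the deformed extension. Your computations check out --- the only nonzero Ricci component is $\rho_{11}=-b$, giving $A=C=0$, $B=\alpha_1^2\alpha_2\,\partial_1 b+\alpha_1\alpha_2^2\,\partial_2 b$ and $D=-\alpha_1^3\,\partial_1 b-\alpha_1^2\alpha_2\,\partial_2 b$, so constancy of $b$ forces $\mathcal{S}^{\nabla}(X)=0$ for every $X$, whence $(M,\nabla)$ is affine Szab\'o by Theorem \ref{TheorSZA1} and Theorem \ref{main3} yields the conclusion for every symmetric $\phi$.
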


\end{document}